\newcommand{\N}{\mathbb{N}}
\renewcommand{\S}{{\mathscr S}}
\newcommand{\T}{{\mathscr T}}
\newcommand{\D}{{\cal D}}
\newcommand{\E}{{\cal E}}
\newtheorem{theorem}{Theorem}[section]
\newtheorem{lemma}[theorem]{Lemma}
\newtheorem{proposition}[theorem]{Proposition}
\newtheorem{definition}[theorem]{Definition}
\newtheorem{corollary}[theorem]{Corollary}
\newtheorem{question}[theorem]{Question}
\theoremstyle{remark}
\newtheorem{remark}[theorem]{\bf Remark}
\newtheorem{example}[theorem]{\bf Example}
\newtheorem*{ack}{Acknowledgements}
\DeclareMathOperator{\rk}{rk}
\DeclareMathOperator{\crk}{corank}
\DeclareMathOperator{\hocolim}{hocolim}
\newcommand{\lat}{L}
\DeclareFontFamily{OT1}{pzc}{}
\DeclareFontShape{OT1}{pzc}{m}{it}{<-> s * [1.150] pzcmi7t}{}
\DeclareMathAlphabet{\mathpzc}{OT1}{pzc}{m}{it}
\newcommand{\Top}{\mathpzc{Top}}
\begin{document}
\title{Face numbers of Engstr\"om representations of matroids}

\author{Steven Klee  \\ Seattle University \\ 
{\tt klees@seattleu.edu} \and Matthew T. Stamps \\ Royal Institute of Technology \\ {\tt stamps@math.kth.se}}

\date{\today}

\maketitle

\begin{abstract}  A classic problem in matroid theory is to find subspace arrangements, specifically hyperplane and pseudosphere arrangements, whose intersection posets are isomorphic to a prescribed geometric lattice.  Engstr\"{o}m recently showed how to construct an infinite family of such subspace arrangements, indexed by the set of finite regular CW complexes.  In this note, we compute the face numbers of these representations (in terms of the face numbers of the indexing complexes) and give upper bounds on the total number of faces in these objects.  In particular, we show that, for a fixed rank, the total number of faces in the Engstr\"om representation corresponding to a codimension one homotopy sphere arrangement is bounded above by a polynomial in the number of elements of the matroid with degree one less than its rank.  \end{abstract}

\section{Introduction}

The \emph{Topological Representation Theorem} for matroids, proved first by Folkman and Lawrence \cite{BLSWZ} for oriented matroids and then by Swartz \cite{Swartz03} for all matroids, asserts that every (oriented) matroid can be realized by a codimension one pseudo/homotopy sphere arrangement.  For a given matroid $M$, Anderson \cite{Anderson2010} and Engstr\"om \cite{Engstrom10} have given different constructions of an arrangement representing $M$.  Both approaches take as input the underlying geometric lattice of $M$.  The former utilizes flags along with tools presented in \cite{Bjorner95} whereas the latter utilizes homotopy colimits of diagrams of spaces \cite{WZZ99}.  

While there are advantages to each approach, one significant difference is that the construction in \cite{Anderson2010} can have arbitrarily high dimension for a fixed rank, while the construction in \cite{Engstrom10} keeps the dimension of the arrangement in direct correspondence with the rank function of the underlying matroid.  The idea behind each is to glue homotopy spheres together to form an arrangement by including either higher dimensional cells or mapping cylinders of an appropriate dimension.    It is then natural to ask, from a computational perspective, how expensive it is to keep the dimension down?  In other words, how many more faces are introduced by Engstr\"om's mapping cylinder approach for controling the dimension?  In this note, we show that the answer to this question is not many when the rank is fixed.  Specifically, we prove that the total number of faces in an Engstr\"om representation of matroid $M$ is a polynomial in the number of elements of $M$ with degree at most one less than its rank.  We also describe the asymptotic behavior of these numbers as the rank increases.

\section{Preliminaries}

We begin with the necessary definitions and theorems concerning geometric lattices and (oriented) matroids \cite{BLSWZ,Oxley,White}; homotopy colimits of diagrams of spaces \cite{WZZ99}; and Engstr\"{o}m representions of matroids \cite{Engstrom10,Stamps11}.  

\subsection{Geometric Lattices and (Oriented) Matroids}

Let $E$ be a finite set.  A \textit{sign vector} is a vector $X \in \{+,0,-\}^E$.  The \textit{zero set} of a sign vector is $z(X) := \{e \in E: X_e = 0\}$, and its support is $\underline{X} := \{e \in E: X_e \neq 0\}$.  The \textit{opposite} of a sign vector $X$ is the vector $-X$ whose entries are the opposites of those in $X$; that is,  
$$(-X)_e = \begin{cases} +, & \text{if }X_e = -, \\ -, & \text{if }X_e = +, \\ 0, & \text{if }X_e = 0.\end{cases}$$  The \textit{composition} of two sign vectors $X$ and $Y$ is the sign vector $X \circ Y$ defined by $$(X \circ Y)_e = \begin{cases} X_e, & \text{ if } X_e \neq 0, \\ Y_e, & \text{ otherwise.} \end{cases}$$  The \textit{separation set} of $X$ and $Y$ is $S(X,Y):=\{e \in E: X_e = -Y_e \neq 0\}$.  

An \textit{oriented matroid} $M$ consists of a finite set $E$, called the \emph{ground set}, and a collection of \textit{covectors} $\mathcal{L} \subseteq \{+,0,-\}^E$ satisfying
\begin{enumerate}[leftmargin = 50pt, rightmargin = 50pt]\itemsep -2pt
\item[(L0)] $0 \in \mathcal{L}$;
\item[(L1)] if $X \in \mathcal{L}$, then $-X \in \mathcal{L}$;
\item[(L2)] if $X,Y \in \mathcal{L}$, then $X \circ Y \in \mathcal{L}$; and
\item[(L3)] if $X,Y \in \mathcal{L}$ and $e \in S(X,Y)$, then there exists $Z \in \mathcal{L}$ such that $Z_e = 0$ and $Z_f = (X \circ Y)_f = (Y \circ X)_f$ for all $f \notin S(X,Y)$.
\end{enumerate}
If we ignore the sign data in $\mathcal{L}$, that is, if we consider only the set $L=\{z(X): X \in \mathcal{L}\}$, then we obtain the underlying matroid of $M$.  In this setting, (L1) becomes trivial and (L0), (L2), and (L3) become a set of axioms for matroids.  In general, a \emph{matroid} $M$ consists of a finite set $E$, and a collection of \emph{flats} $L \subseteq 2^E$, satisfying  \begin{enumerate}[leftmargin = 50pt, rightmargin = 50pt]\itemsep -2pt
\item[(F1)] $E \in L$; 
\item[(F2)] if $X,Y \in L$, then $X \cap Y \in L$; and 
\item[(F3)] for every $X \in L$, the set of all $Y \setminus X$ where $X \subsetneq Y \in L$ and there is no $Z \in L$ such that  $X \subsetneq Z \subsetneq Y$ forms a partition of $E \setminus X$.
\end{enumerate}
For readers who prefer to think of matroids in terms of independent sets, the flats of $M$ are the rank-maximal subsets of $E$, that is, $\rk(X \cup e) > \rk(X)$ for any $e \in E \setminus X$.  It is well known, see \cite{Bjorner82,Stanley}, that $L$ forms a graded \emph{geometric lattice}, meaning
\begin{enumerate}\itemsep -2pt
\item[(1)] $\lat$ is semimodular (that is, $\rk(p) + \rk(q) \geq \rk(p \wedge q) + \rk(p \vee q)$ for all $p,q \in L$) and
\item[(2)] every element  of $\lat$ is a join of atoms.
\end{enumerate}

If $M$ is oriented, the map $z: \mathcal{L} \rightarrow L$ is a cover-preserving, order-reversing surjection of $\mathcal{L}$ onto $L$, see \cite[Proposition 4.1.13]{BLSWZ}.   For every subset $X \subseteq E$, let $$\overline{X} = \bigcap_{X \subseteq Y \in L} Y$$ denote the \emph{closure} of $X$ in $M$ and define the \emph{rank} of $X$ to be the rank of $\overline{X}$ in $L$.  A \emph{weak map} between matroids $M$ and $N$ is a function $\tau : E(M) \to E(N)$ such that $\rk_M(X) \geq \rk_N(\tau(X))$ for all $X \subseteq E(M)$.  Every weak map $\tau : M \to N$ induces a weakly rank-decreasing, order-preserving map $\overline{\tau} : \lat(M) \to \lat(N)$ given by $X \mapsto \overline{\tau(X)}$ for all $X \in \lat(M)$ ( see \cite{White}). 

\begin{lemma}[\cite{Stamps11}, Lemma 3] \label{surj_poset}
If $\tau : M \to N$ is a surjective weak map, then $\overline{\tau} : \lat(M) \to \lat(N)$ is a surjective poset map.
\end{lemma}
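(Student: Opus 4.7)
The plan is to verify surjectivity of $\overline{\tau}$ directly: given an arbitrary flat $Y \in L(N)$, I would construct a flat $X \in L(M)$ with $\overline{\tau}(X) = Y$ by lifting a basis of $Y$ through $\tau$ and then closing it up in $M$.

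First, I would choose a basis $B \subseteq Y$ of $Y$ in $N$, so that $|B| = \rk_N(Y)$ and $\overline{B} = Y$. Using surjectivity of $\tau$ at the level of ground sets, I would then lift $B$ to a set $B' \subseteq E(M)$ such that $\tau$ restricts to a bijection $B' \to B$. The weak map inequality, applied to $Z = B'$, gives
$$\rk_M(B') \;\geq\; \rk_N(\tau(B')) \;=\; \rk_N(B) \;=\; |B| \;=\; |B'|,$$
and since $\rk_M(B') \leq |B'|$ always, we conclude $B'$ is independent in $M$ with $\rk_M(B') = |B|$. I would then set $X := \overline{B'} \in L(M)$.

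It remains to show $\overline{\tau(X)} = Y$. One inclusion is immediate: $\overline{\tau(X)} \supseteq \overline{\tau(B')} = \overline{B} = Y$. For the reverse inclusion, I would show $\tau(X) \subseteq Y$. Given $e \in X = \overline{B'}$, the closure definition yields $\rk_M(B' \cup \{e\}) = \rk_M(B') = |B|$, so by the weak map inequality
$$\rk_N\bigl(B \cup \{\tau(e)\}\bigr) \;=\; \rk_N\bigl(\tau(B' \cup \{e\})\bigr) \;\leq\; \rk_M(B' \cup \{e\}) \;=\; |B|.$$
If $\tau(e)$ were not in $Y$, then because $Y = \overline{B}$ is a flat we would have $\rk_N(B \cup \{\tau(e)\}) > \rk_N(B) = |B|$, a contradiction. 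Hence $\tau(e) \in Y$, giving $\tau(X) \subseteq Y$ and therefore $\overline{\tau(X)} \subseteq Y$.

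The argument is short and its only delicate point is the interplay between the weak map inequality and the lift of the basis, specifically the step which forces $B'$ to be independent and the step which pushes any new element of the closure $\overline{B'}$ back into $Y$. Since the paper already records that $\overline{\tau}$ is order-preserving, combining these observations proves surjectivity and completes the proof.
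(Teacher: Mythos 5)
The paper cites this lemma from \cite{Stamps11} without reproducing the proof, so there is no in-text argument to compare against; your proof must be judged on its own.

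Your argument is correct. The two delicate steps both check out: (i) lifting a basis $B$ of $Y$ along the surjection to a set $B'$ of the same cardinality and then invoking $\rk_M(B')\geq\rk_N(\tau(B'))=|B'|$ to force independence of $B'$; and (ii) showing $\tau(\overline{B'})\subseteq Y$ by applying the weak map inequality to $B'\cup\{e\}$ for each $e\in\overline{B'}$ and using that $Y$ is closed (so any $\tau(e)\notin Y$ would strictly increase $\rk_N(B\cup\{\tau(e)\})$, since $\rk_N(B\cup\{\tau(e)\})=\rk_N(Y\cup\{\tau(e)\})$ given that $\overline{B}=Y$). Together with the containment $\overline{\tau(\overline{B'})}\supseteq\overline{\tau(B')}=\overline{B}=Y$, this gives $\overline{\tau}(\overline{B'})=Y$. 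The edge case $Y=\hat 0_N$ is also handled correctly, since then $B=\emptyset$ and the argument shows every loop of $M$ is sent to a loop of $N$. This is the natural direct proof via basis lifting and, to the best of my recollection, matches the approach in \cite{Stamps11}.
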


\subsection{Topological Representations of (Oriented) Matroids}

One of the most natural families of oriented matroids arises from real hyperplane arrangements. Let $\mathcal{A} = \{H_1, H_2, \ldots, H_n\}$ be an essential arrangement of hyperplanes in $\mathbb{R}^{r}$, each of which contains the origin.  Intersecting $\bigcup_i H_i$ with the unit sphere in a generic hyperplane $\mathcal{H} \subseteq \mathbb{R}^r$ yields a cell decomposition of $S^{r-2}$.  Since each hyperplane $H_i$ has a positive side and a negative side, we may associate a sign vector in $\{+,-,0\}^n$ to each cell of this decomposition.  The collection of such sign vectors, together with the zero vector, satisfy the covector axioms of an oriented matroid \cite{BLSWZ}.  If an oriented matroid arises from a hyperplane arrangement in this way, we say that $M$ is \textit{realizable}.  

The signed covectors of an oriented matroid form a lattice whose componentwise order relations are induced by declaring that $0 < +,-$.  Folkman and Lawrence \cite{FL78} proved that this lattice is isomorphic to the face poset of a cell decomposition of $S^{r-2}$.  This cell decomposition is known as the \textit{Folkman-Lawrence representation of $M$}, and we denote it as $\S M$.  In the case that $M$ is realizable, this decomposition is the natural one formed by intersecting the sphere with the corresponding hyperplane arrangement.  

Without the orientation data, it is unknown how to find such a decomposition of $S^{r-2}$ for prescribed matroid $M$.  Instead, one can hope to construct a cell complex containing an arrangement of homotopy spheres whose intersection lattice matches $\lat(M)$.   Engstr\"om \cite{Engstrom10} gave one such construction using diagrams of spaces and homotopy colimits.  Diagrams of spaces provide a convenient way to arrange topological spaces according to some prescribed combinatorial information, and homotopy colimits are the natural tool for gluing those spaces together with respect to the given information.  

\begin{definition}
A \emph{$P$-diagram of spaces} $\D$ consists of the following data: \begin{itemize}\itemsep -2pt \item a finite poset $P$,  \item a CW complex $D(p)$ for every $p \in P$, \item a continuous map $d_{pq} : D(p) \to D(q)$ for every pair $p \geq q$ of $P$ satisfying \\ $d_{qr}\circ d_{pq}(x) = d_{pr}(x)$ for every triple $p \geq q \geq r$ of $P$ and $x \in D(p)$. \end{itemize}  
\end{definition}
To every diagram $\D$, we associate a topological space via a (homotopy) colimit.  In our setting, the \emph{homotopy colimit} of a diagram $\D : P \to \Top$ is the space $$\hocolim_P \D = \coprod\limits_{p \in P} (\Delta(P_{\leq p}) \times D(p)) \ /  \sim$$ where $\Delta(P_{\leq p})$ is the \emph{order complex} of $P_{\leq p}$ and $\sim$ is the transitive closure of the relation $(a,x) \sim (b,y)$ for each $a \in \Delta(P_{\leq p})$, $b \in \Delta(P_{\leq q})$, $x \in D(p)$ and $y \in D(q)$ if and only if $p \geq q$, $d_{pq}(x) = y$, and $a = b$. 

\begin{example}\label{ex:hocolim}
Let $P$ be the dual of the face poset (the face poset ordered by reverse inclusion) of the simplicial complex shown in Figure \ref{figure:complex} (left).

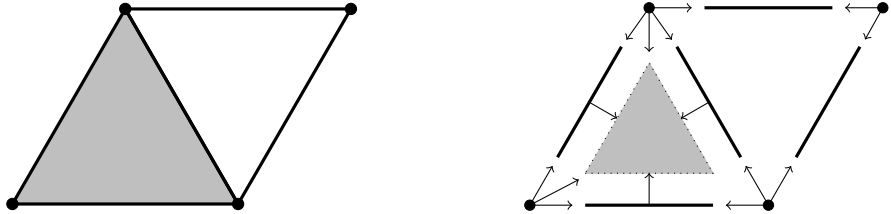
\begin{figure}[h]
\begin{center}
\begin{minipage}{.45\textwidth}
\begin{center}
\begin{tikzpicture}[scale=1.5]
\filldraw[color=gray!50] (0,0) -- (2,0) -- (1, 1.73);
\draw[very thick](2,0) -- (3,1.73) -- (1,1.73) -- (2,0);
\draw[very thick] (0,0) -- (2,0) -- (1,1.73) -- (0,0);
\filldraw[black] (0,0) circle (.05);
\filldraw[black] (2,0) circle (.05);
\filldraw[black] (1,1.73) circle (.05);
\filldraw[black] (3,1.73) circle (.05);
\end{tikzpicture}
\end{center}
\end{minipage}
\begin{minipage}{0.45\textwidth}
\begin{center}
\begin{tikzpicture}[scale=0.85]
\filldraw[color=gray!50] (0,0) -- (2,0) -- (1,1.73);
\draw[dotted] (0,0) -- (2,0) -- (1,1.73) -- (0,0);
\filldraw[black] (-.866, -.5) circle (.08);
\filldraw[black] (1, 2.59) circle (.08);
\filldraw[black] (2.866, -.5) circle (.08);
\filldraw[black] (4.655, 2.59) circle (.08);
\draw[very thick] (-.433,.25) -- (1-.433,1.73+.25);
\draw[very thick] (2+.433,.25) -- (1+.433, 1.73+.25);
\draw[very thick] (0,-.5) -- (2,-.5);
\draw[very thick] (2.866+.433, .25) -- (4.73-.433, 1.73+.25);
\draw[very thick] (1.866, 2.59) -- (3.866, 2.59);
\draw[->] (-.866, -.5) -- (-.1,-.1);
\draw[->] (-.866,-.5) -- (-.2, -.5);
\draw[->] (-.866,-.5) -- (-.512, .112);
\draw[->] (.066, 1.11) -- (.066+.433, 1.11-.25);
\draw[->] (1,2.59) -- (.646, 2.09);
\draw[->] (1,2.59) -- (1.354, 2.09);
\draw[->] (1,2.59) -- (1,1.9);
\draw[->] (1,2.59) -- (1.666,2.59);
\draw[->] (1.933, 1.11) -- (1.933-.433, 1.11-.25);
\draw[->] (1,-.5) -- (1,0);
\draw[->] (2.866, -.5) -- (2.2,-.5);
\draw[->] (2.866, -.5) -- (2.512, .112);
\draw[->] (2.866, -.5) -- (3.22, .112);
\draw[->] (4.655, 2.59) -- (4.065, 2.59);
\draw[->] (4.655, 2.59) -- (4.378, 2.094);
\end{tikzpicture}
\end{center}
\end{minipage}
\caption{A simplicial complex (left) viewed as a diagram of spaces (right).}
\vspace{-0.25cm}
\label{figure:complex}
\end{center}
\end{figure}

One can form a diagram of spaces $\D$ over $P$ by declaring that $D(p) = p$ for each face $p \in \Delta$ and that $d_{pq}$ is the natural inclusion $p \hookrightarrow q$ for each $p \subseteq q$ in $\Delta$.  The resulting diagram is shown in Figure 1 (right).  Its homotopy colimit is illustrated in Figure \ref{figure:homotopy-colimit} (left), along with a decomposition into open stars (right).  This decomposition will be useful for the proof of Theorem \ref{thm:main}.

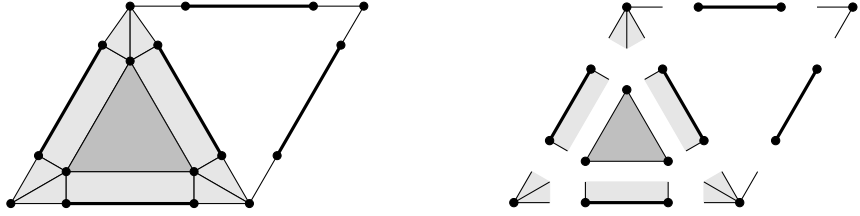
\begin{figure}[h]
\begin{center}
\begin{minipage}{0.45\textwidth}
\begin{center}
\begin{tikzpicture}[scale=0.85]
\filldraw[fill=gray!20] (0,0) -- (1,1.73) -- (1-.433, 1.73+.25) -- (-.433, .25) -- (0,0);
\filldraw[fill=gray!20] (0,0) -- (-.866,-.5) -- (-.433,.25) -- (0,0);
\filldraw[fill=gray!20] (0,0) -- (-.866,-.5) -- (0,-.5) -- (0,0);
\filldraw[fill=gray!20] (1,2.59) --  (1-.433,1.73+.25) -- (1,1.73) -- (1,2.59);
\filldraw[fill=gray!20] (1,2.59) --  (1+.433, 1.73+.25) -- (1,1.73) -- (1,2.59);
\filldraw[fill=gray!20]  (1,1.73) -- (1+.433, 1.73+.25) --  (2+.433,.25) -- (2,0) -- (1,1.73);
\filldraw[fill=gray!20] (0,0) -- (2,0) -- (2,-.5) -- (0,-.5) -- (0,0);
\filldraw[fill=gray!20] (2.866, -.5) -- (2,0) -- (2+.433,.25) -- (2.866, -.5);
\filldraw[fill=gray!20] (2.866, -.5) -- (2,0) -- (2,-.5) -- (2.866, -.5);
\draw (1,2.59) -- (2,2.59);
\draw (3.866,2.59) -- (4.655, 2.59);
\draw (4.73-.433, 1.73+.25) -- (4.655, 2.59);
\draw (2.866, -.5) -- (2.866+.433, .25);
\filldraw[color=gray!50] (0,0) -- (2,0) -- (1,1.73);
\draw (0,0) -- (2,0) -- (1,1.73) -- (0,0);
\filldraw[black] (0,0) circle (.0625);
\filldraw[black] (2,0) circle (.0625);
\filldraw[black] (1,1.73) circle (.0625);
\filldraw[black] (-.866, -.5) circle (.0625);
\filldraw[black] (1, 2.59) circle (.0625);
\filldraw[black] (2.866, -.5) circle (.0625);
\filldraw[black] (4.655, 2.59) circle (.0625);
\draw[very thick] (-.433,.25) -- (1-.433,1.73+.25);
\filldraw[black] (-.433,.25) circle (.0625);
\filldraw[black] (1-.433, 1.73+.25) circle (.0625);
\draw[very thick] (2+.433,.25) -- (1+.433, 1.73+.25);
\filldraw[black] (2+.433, .25) circle (.0625);
\filldraw[black] (1+.433, 1.73+.25) circle (.0625);
\draw[very thick] (0,-.5) -- (2,-.5);
\filldraw[black] (0,-.5) circle (.0625);
\filldraw[black] (2,-.5) circle (.0625);
\draw[very thick] (2.866+.433, .25) -- (4.73-.433, 1.73+.25);
\filldraw[black] (2.866+.433, .25) circle (.0625);
\filldraw[black] (4.73-.433, 1.73+.25) circle (.0625);
\draw[very thick] (1.866, 2.59) -- (3.866, 2.59);
\filldraw[black] (1.866,2.59) circle (.0625);
\filldraw[black] (3.866,2.59) circle (.0625);
\end{tikzpicture}
\end{center}
\end{minipage}
\begin{minipage}{0.4\textwidth}
\begin{center}
\begin{tikzpicture}[scale=0.55]
\filldraw[fill=gray!50] (0,0) -- (2,0) -- (1,1.73) -- (0,0);
\filldraw[black] (0,0) circle (.1);
\filldraw[black] (2,0) circle (.1);
\filldraw[black] (1,1.73) circle (.1);
\filldraw[fill=gray!20, color=gray!20] (-.433,.25) -- (1-.433,1.73+.25) --  (.134, 1.73+.5) --(-.866,.5) -- (-.433, .25);
\draw[very thick] (-.866,.5) -- (.135,1.73+.5);
\draw (-.866,.5) -- (-.433,.25);
\draw (.135,1.73+.5) -- (.135+.433, 1.73+.25);
\filldraw[fill=gray!20, color=gray!20] (2+.433,.25) -- (1+.433, 1.73+.25) -- (1+.866, 1.73+.5) -- (2+.866,.5) -- (2+.433,.25);
\draw[very thick] (1+.866, 1.73+.5) -- (2+.866, .5);
\draw  (1+.433, 1.73+.25) -- (1+.866, 1.73+.5);
\draw (2+.866,.5) -- (2+.433,.25);
\draw[fill=gray!20, color=gray!20] (0,-.5) -- (2,-.5) -- (2,-1) -- (0,-1) -- (0,-.5);
\draw[very thick] (0,-1) -- (2,-1);
\draw (0,-1) -- (0,-.5); 
\draw (2,-1) -- (2,-.5);
\draw[very thick] (4.598,.5) -- (5.598,1.73+.5);
\filldraw[black] (4.598,.5) circle (.1);
\filldraw[black] (5.598,1.73+.5) circle (.1);
\draw[very thick] (1+1.73, 2+1.73) -- (3+1.73, 2+1.73);
\filldraw[black] (1+1.73, 2+1.73) circle (.1);
\filldraw[black] (3+1.73, 2+1.73) circle (.1);
\filldraw[gray!20] (-1.73,-1) -- (-1.73+.866,-1) -- (-1.73+.866,-.5) -- (-1.73,-1);
\filldraw[gray!20] (-1.73,-1) -- (-1.73+.433,-.25) -- (-1.73+.866,-.5);
\draw (-1.73,-1) -- (-1.73+.866,-1); 
\draw (-1.73,-1) -- (-1.73+.866,-.5);
\draw (-1.73,-1) -- (-1.73+.433,-.25);
\draw[fill=gray!20, color=gray!20] (1,2+1.73) -- (1,1+1.73) -- (1-.433,1.25+1.73);
\draw[fill=gray!20, color=gray!20] (1,2+1.73) -- (1,1+1.73) -- (1+.433,1.25+1.73);
\draw (1,2+1.73) -- (1,1+1.73);
\draw (1,2+1.73) -- (1.866,2+1.73);
\draw (1,2+1.73) -- (1-.433, 1.25+1.73);
\draw (1,2+1.73) -- (1+.433, 1.25+1.73);
\draw[fill=gray!20, color=gray!20] (2+1.73,-1) -- (2+.866,-.5) -- (2+.866,-1);
\draw (2+1.73,-1) -- (2+.866,-1);
\draw[fill=gray!20, color=gray!20] (2+1.73,-1) -- (2+.866,-.5) -- (2+.866+.433, -.25);
\draw (2+1.73,-1) -- (2+.866,-.5);
\draw (2+1.73,-1) -- (2+.866+.433, -.25);
\draw (2+1.73,-1) -- (2+1.73+.433, -.25);
\draw (6.464, 2+1.73) -- (6.464-.866, 2+1.73);
\draw (6.464, 2+1.73) -- (6.464-.433, 2+1.73-.75);
\filldraw[black] (-1.73,-1) circle (.1);
\filldraw[black] (1, 2+1.73) circle (.1);
\filldraw[black] (2+1.73, -1) circle (.1);
\filldraw[black] (6.464, 2+1.73) circle (.1);
\filldraw[black] (-.866,.5) circle (.1);
\filldraw[black] (1-.866,.5+1.73) circle (.1);
\filldraw[black] (0,-1) circle (.1);
\filldraw[black] (2,-1) circle (.1);
\filldraw[black] (1+.866, 1.73+.5) circle (.1);
\filldraw[black] (2+.866, .5) circle (.1);
\end{tikzpicture}
\end{center}
\end{minipage}
\caption{The homotopy colimit of $\Delta$ (left) decomposed as a union of open stars (right).}
\vspace{-0.25cm}
\label{figure:homotopy-colimit}
\end{center}
\end{figure}

\end{example}

The natural notion of a structure-preserving map between diagrams of spaces $\D : P \to \Top$ and $\E : Q \to \Top$ is a \emph{morphism of diagrams} $(f,\alpha) : \D \to \E$ consisting of a poset map $f : P \to Q$ together with a natural transformation $\alpha$ from $\D$ to $\E \circ f$.   Morphisms of diagrams induce continuous maps between the corresponding homotopy colimits (see \cite{WZZ99}).  In fact, these maps are completely explicit. If we write each point in $\Delta(P_{\leq p}) \times D(p)$ as $(\lambda_1 p_1 + \cdots + \lambda_k p_k , x)$ where $p_1 \leq p_2 \leq \cdots \leq p_k = p$, $\lambda_i \geq 0$, $\sum_i \lambda_i = 1$, and $x \in D(p)$, then $$f^\alpha(\lambda_1 p_1 + \cdots + \lambda_k p_k , x) = (\lambda_1 f(p_1) + \cdots + \lambda_k f(p_k) , \alpha_p(x))$$  is the desired map.  

\begin{remark}\label{cellular} If $\D$ and $\E$ are diagrams of CW complexes with cellular maps and the maps in $\alpha$ are all cellular, then $f^{\alpha}$ is cellular as well. \end{remark}

Let $M$ be a rank $r$ matroid and $\ell$ be a rank- and order-reversing poset map from $\lat(M)$ to $B_r$, the boolean lattice on $[r]$.  For every locally finite, regular $CW$ complex $X$ and $\sigma \in B_r$, define a $B_r$-diagram $\D_X$ by $$D_X(\sigma) = \Asterisk_{i=1}^r \begin{cases} X & \text{if $i \in \sigma$} \\ \emptyset & \text{if $i \notin \sigma$} \end{cases}$$ with the corresponding inclusions.  This gives an $\lat(M)$-diagram $\D_X(M,\ell) := \D_X \circ \ell$.  The \emph{Engstr\"om representation} of a pair $(M,\ell)$, indexed by $X$, is the space $$ \T_X(M,\ell) := \hocolim_{\lat(M)} \D_X(M,\ell).$$

Engstr\"om \cite[Theorem 3.7]{Engstrom10} showed that the homotopy type of $\T_X(M,\ell)$ is independent of $\ell$, so it will be convenient to fix a canonical choice $\hat{\ell} : \lat(M) \to B_r$ given by $\hat{\ell}(p) = \{1,2,\dots,\crk(p)\}$ and abbreviate $\T_X(M,\hat{\ell})$ to $\T_X M$.  

\begin{theorem}[\cite{Stamps11}, Corollary 3]
For every weak map $\tau : M \to N$, there exists a natural transformation $\iota : \D_X(M) \to \D_X(\tau(M))$ such that $\overline{\tau}^{\iota} : \T_X M \to \T_X N$ is a continuous map.
\end{theorem}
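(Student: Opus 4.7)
The plan is to combine Lemma \ref{surj_poset} with a componentwise construction of $\iota$ using canonical inclusions of joins, then invoke the morphism-of-diagrams machinery from \cite{WZZ99}. First, applying Lemma \ref{surj_poset} to $\tau$, viewed as a surjection onto its image matroid $\tau(M)$, produces the order-preserving poset map $\overline{\tau} : \lat(M) \to \lat(\tau(M))$.

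Next, I would define the natural transformation $\iota : \D_X(M) \Rightarrow \D_X(\tau(M)) \circ \overline{\tau}$ componentwise. At each $p \in \lat(M)$, we have $D_X(M)(p) = \Asterisk_{i \in \hat{\ell}_M(p)} X$ and $D_X(\tau(M))(\overline{\tau}(p)) = \Asterisk_{i \in \hat{\ell}_{\tau(M)}(\overline{\tau}(p))} X$, with both labelling sets the initial segments $\{1, \ldots, \crk(\cdot)\}$. Using the convention that $\tau(M)$ retains the rank of $M$, the weak-map hypothesis $\rk_M(p) \geq \rk_{\tau(M)}(\overline{\tau}(p))$ translates into the containment $\hat{\ell}_M(p) \subseteq \hat{\ell}_{\tau(M)}(\overline{\tau}(p))$, and I take $\iota_p$ to be the corresponding canonical inclusion of joins. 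Naturality is then immediate: for every $p \geq q$ in $\lat(M)$, each of the four arrows in the naturality square is a canonical inclusion $\Asterisk_{i \in A} X \hookrightarrow \Asterisk_{i \in B} X$ induced by a containment of initial segments, and such inclusions are strictly functorial under subset inclusion.

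The standard fact from \cite{WZZ99} that a morphism of diagrams $(\overline{\tau}, \iota)$ induces a continuous (in fact cellular, by Remark \ref{cellular}) map on homotopy colimits then delivers the desired $\overline{\tau}^{\iota} : \T_X M \to \T_X N$. The main obstacle is the rank/corank bookkeeping: the weak-map inequality on ranks only translates into the required reverse inequality on coranks once one adopts the convention that $\tau(M)$ keeps the rank of $M$ and $\overline\tau$ is only weakly rank-decreasing on $\lat(M)$. With that convention pinned down, the construction of $\iota_p$ as a canonical inclusion is uniformly well-defined, and the remainder of the argument is entirely formal.
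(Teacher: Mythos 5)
The paper does not actually prove this statement --- it is imported verbatim from \cite{Stamps11} as Corollary 3 --- so there is no internal argument to compare against. Your reconstruction (define $\iota$ componentwise via canonical inclusions of joins, then invoke the functoriality of $\hocolim$ under morphisms of diagrams from \cite{WZZ99}) is the natural mechanism and is very likely the intended one.

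That said, the step you flag as ``the main obstacle'' is a genuine gap, and the convention you invoke does not close it. For the canonical inclusion $\iota_p : X^{*\crk_M(p)} \hookrightarrow X^{*\crk_{\tau(M)}(\overline\tau(p))}$ to exist, one needs
$$\crk_M(p) \;\leq\; \crk_{\tau(M)}(\overline\tau(p)), \quad \text{i.e.,} \quad \rk(M) - \rk(\tau(M)) \;\leq\; \rk_M(p) - \rk_{\tau(M)}(\overline\tau(p)).$$
The weak-map axiom makes the right-hand side nonnegative, but the left-hand side can be strictly positive: a surjective weak map onto its image can drop overall rank. Concretely, the identity map is a surjective weak map $U_{2,2} \to U_{1,2}$, and for $p = \{1\}$ one has $\crk_{U_{2,2}}(p) = 1$ while $\overline{\tau}(p) = \hat{1}$ in $L(U_{1,2})$ so $\crk_{U_{1,2}}(\overline\tau(p)) = 0$; the inclusion of joins does not exist. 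Your sentence ``using the convention that $\tau(M)$ retains the rank of $M$'' simply postulates $\rk(M) = \rk(\tau(M))$ rather than establishing it, and it is not the standard meaning of a weak-map image. Either the cited result in \cite{Stamps11} restricts to rank-preserving weak maps (which is all this paper ever uses --- the weak maps in Remark \ref{rem:main} are surjections $U_{r,n} \to M$ between matroids of the same rank $r$, where your inclusion argument is correct), or the rank-dropping case requires a different definition of $\iota_p$ (a collapse rather than an inclusion) that your proposal does not supply. As written, your argument proves the statement only for rank-preserving $\tau$.
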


The following result is an immediate corollary of Lemma \ref{surj_poset} and Remark \ref{cellular}.

\begin{corollary}\label{surj_cell}
If $\tau : M \to N$ is surjective, then $\overline{\tau}^{\iota} : \T_X M \to \T_X N$ is a surjective cellular map.
\end{corollary}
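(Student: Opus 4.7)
The statement combines two properties of $\overline{\tau}^{\iota}$: cellularity and surjectivity. I would address these in turn, each via one of the two cited results.

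For cellularity, my strategy is to verify the hypotheses of Remark \ref{cellular}. Since $X$ is a locally finite regular CW complex, each value $D_X(\sigma)$, being an iterated topological join of copies of $X$, inherits a regular CW structure, and each structure map $d_{\sigma\tau}$ in $\D_X$ is a cellular inclusion. Pulling back along the order-reversing map $\hat{\ell}$ preserves cellularity, so both $\D_X(M)$ and $\D_X(N)$ are diagrams of CW complexes with cellular structure maps. The natural transformation $\iota$ supplied by the preceding theorem is assembled from the analogous inclusions of joins of copies of $X$, so each component $\iota_p$ is cellular. Remark \ref{cellular} then yields that $\overline{\tau}^{\iota}$ is cellular.

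For surjectivity, Lemma \ref{surj_poset} gives that $\overline{\tau} : \lat(M) \to \lat(N)$ is a surjective poset map. Using the explicit formula for $f^{\alpha}$ recorded in Section 2.2, I would show that every open cell of $\T_X N$ lies in the image. A typical such cell is specified by a chain $q_1 < q_2 < \cdots < q_k$ in $\lat(N)$ together with an open cell $e$ of $D_X(N)(q_k)$. I would first lift $q_k$ to some $p \in \lat(M)$ with $\overline{\tau}(p) = q_k$, then lift the truncated chain $q_1 < \cdots < q_{k-1}$ to a chain in the interval $[\hat{0},p] \subseteq \lat(M)$ using surjectivity and order-preservation of $\overline{\tau}$. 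Once such a chain is produced, the surjectivity of $\iota_p$, which is an inclusion between joins of copies of $X$, provides a cell of $D_X(M)(p)$ mapping onto $e$, completing the preimage under the formula for $f^{\alpha}$.

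The main obstacle is the chain-lifting step, since a surjective order-preserving map of finite posets need not induce a surjective simplicial map on order complexes in general. I expect the matroid setting to make this step essentially automatic: weak maps respect the rank and atomic generation of geometric lattices, so $\overline{\tau}$ restricted to any principal order ideal $[\hat{0},p]$ with $\overline{\tau}(p) = \hat{1}_N$ should remain surjective onto $\lat(N)$. If this proves delicate, a backup is to observe that $\overline{\tau}^{\iota}(\T_X M)$ is closed and contains every top-dimensional cell of $\T_X N$, which together with cellularity forces equality with $\T_X N$.
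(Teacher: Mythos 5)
The paper's own proof consists of the single sentence ``The following result is an immediate corollary of Lemma~\ref{surj_poset} and Remark~\ref{cellular},'' so any fleshed-out verification is more detailed than what the authors wrote. Your cellularity argument matches the intent of Remark~\ref{cellular} and is fine. Your surjectivity argument, however, contains a genuine error at the point where you invoke ``the surjectivity of $\iota_p$, which is an inclusion between joins of copies of $X$.'' An inclusion of joins is surjective only when it is an equality, i.e., when $\crk_M(p) = \crk_N(q_k)$. For an arbitrary lift $p$ of $q_k$ (which is all that Lemma~\ref{surj_poset} guarantees), the weak map inequality gives $\rk_M(p) \geq \rk_N(q_k)$ and hence $\crk_M(p) \leq \crk_N(q_k)$, so $\iota_p : X^{*\crk_M(p)} \hookrightarrow X^{*\crk_N(q_k)}$ is a \emph{proper} inclusion whenever this inequality is strict, and is not surjective. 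You must choose $p$ with $\rk_M(p) = \rk_N(q_k)$, which Lemma~\ref{surj_poset} by itself does not provide.

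The fix handles the chain-lifting concern you raise (which is indeed a real issue for general poset maps) at the same time. Given $q_1 < \cdots < q_k$ in $\lat(N)$, choose nested bases $B_1 \subseteq \cdots \subseteq B_k$ of the flats $q_i$, lift $B_k$ along the surjection $\tau$ to a transversal $B'_k \subseteq E(M)$, and set $B'_i \subseteq B'_k$ to be the subset lying over $B_i$. The weak map inequality $\rk_M(B'_i) \geq \rk_N(\tau(B'_i)) = |B_i| = |B'_i|$ forces each $B'_i$ to be independent in $M$; thus the flats $p_i := \overline{B'_i}$ satisfy $\rk_M(p_i) = \rk_N(q_i)$ and form a strictly increasing chain. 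Applying the weak map inequality to $B'_i \cup \{e\}$ for $e \in p_i$ shows $\tau(p_i) \subseteq q_i$, so $\overline{\tau}(p_i) = q_i$. With this choice $\iota_{p_k}$ is an equality, and the explicit formula for $f^{\alpha}$ produces the required preimage of any point of $\T_X N$. Your ``backup'' argument via top-dimensional cells is too vague to substitute for this; the chain-lifting route is the one that closes the gap.
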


\section{Results}

This is the main section of the paper where we compute the $f$-polynomial of the Engstr\"om representation $\T_X M$ in terms of the $f$-polynomial of its indexing complex $X$.  We also show that the total number of faces in $\T_{S^0} M$ is bounded above by a  polynomial in the number of elements of $M$ with degree at most one less than its rank and compare the behavior of this number to that of the Folkman-Lawrence representation $\S M$ in the special case that $M$ is uniform.  

\subsection{Face Polynomials of Engstr\"om Representations}
 
The \textit{$f$-polynomial} of a finite CW complex $\Delta$ is the polynomial $f(\Delta; t):=\sum_{i \geq 0}f_{i-1}(\Delta)t^i$, where $f_{i-1}(\Delta)$ denotes the number of $(i-1)$-dimensional faces in $\Delta$.  By convention, we set $f_{-1}(\Delta)=1$, corresponding to the empty face, for any nonempty complex $\Delta$, and make use of the following three standard formulas for computing $f$-polynomials.  If $\Delta$ and $\Gamma$ are regular CW complexes, then \begin{align} 
f(\Delta * \Gamma; t) &= f(\Delta; t) \cdot f(\Gamma; t), \\ 
f(\Delta \times \Gamma; t) &= \frac{(f(\Delta; t)-1) \cdot (f(\Gamma; t)-1)}{t} + 1, \\ 
f(\Delta \sqcup \Gamma; t) &= f(\Delta; t) + f(\Gamma; t) - 1. 
\end{align} 

\begin{theorem}\label{thm:main}
For any given CW complex $X$ and matroid $M$, $$f(\T_X M; t) = 1 + \sum\limits_{p \in \lat(M)} \frac{(f(\Delta^{\circ}_M(p); t)-1) \cdot (f(X; t)^{\crk(p)} - 1)}{t},$$ where $\Delta^{\circ}_M(p)$ is the open star of $p$ in $\Delta(\lat(M)_{\leq p})$.
\end{theorem}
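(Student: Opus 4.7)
The plan is to equip $\T_X M$ with the CW structure inherited from the diagram, partition its open cells into classes indexed by $L(M)$, and then assemble the $f$-polynomial using the product formula (2) and the join formula (1).

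First, I would observe that every order complex of a lower interval decomposes into open stars according to the maximum vertex of each simplex:
$$\Delta(L(M)_{\leq p}) = \coprod_{q \leq p} \Delta^{\circ}_M(q).$$
Combined with the fact that each Engstr\"om diagram map $d_{p,q}: X^{*\crk(p)} \hookrightarrow X^{*\crk(q)}$ is a cellular inclusion (a join with extra copies of $X$), this leads to the key claim: every open cell of $\T_X M$ admits a unique canonical representative of the form $\tau \times c$, where $\tau \in \Delta^{\circ}_M(p)$ and $c$ is an open cell of $D_X(p)$ for some $p \in L(M)$. The canonical $p$ is the maximum vertex of the simplex $\tau$, and the hocolim identifications push $c$ down along $d_{p',p}$ until it lies in $D_X(p)$. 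This decomposition is exactly what is illustrated on the right side of Figure \ref{figure:homotopy-colimit}.

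Next, I would sum $t^{\dim \tau + \dim c + 1}$ over these canonical representatives, together with a leading $1$ for the empty face. For each fixed $p$, the double sum factors, via the identity $f(A; t) - 1 = \sum_{\sigma \neq \emptyset} t^{\dim \sigma + 1}$ (equivalently, formula (2) applied to the product $\Delta^{\circ}_M(p) \times D_X(p)$), as
$$\frac{(f(\Delta^{\circ}_M(p); t) - 1)(f(D_X(p); t) - 1)}{t}.$$
Iterating the join formula (1) then yields $f(D_X(p); t) = f(X; t)^{\crk(p)}$, giving the stated identity.

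The main obstacle is establishing the canonical-form bijection in the first step. This requires tracing through the hocolim identifications carefully and invoking cellularity plus injectivity of the diagram maps, so that no cell of $\T_X M$ is counted twice and none is missed. Once this combinatorial correspondence is in place, the remainder of the proof is a formal application of (1) and (2), with the edge case $\crk(p) = 0$ (where $D_X(p)$ is empty) contributing zero on both sides.
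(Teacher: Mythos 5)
Your proposal is correct and follows essentially the same route as the paper: both decompose $\T_X M$ into pieces indexed by $p \in L(M)$ of the form $\Delta^{\circ}_M(p) \times D_X(p)$ (the paper phrases this as replacing $\Delta(L(M)_{\leq p})$ by the open star in the disjoint-union $f$-polynomial, you phrase it as choosing a canonical cell representative whose simplex factor has maximum vertex $p$), and then both apply formulas (1) and (2). The only real difference is that you flag the bijectivity of the canonical-representative map as the step needing care, whereas the paper treats the open-star decomposition as visually evident from Figure~\ref{figure:homotopy-colimit}; your justification via injectivity of the diagram inclusions and pushing a cell down to the minimal $q$ with $\tau \in \Delta(L(M)_{\leq q})$ is the right way to make that precise.
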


\begin{proof}  
The Engstr\"om representation $\T_X M$ arises naturally as a quotient of the space $Y$ that is a disjoint union of spaces indexed by the elements $p \in \lat(M)$.  Each such $p$ contributes a component to $Y$ that is the product of $\Delta(\lat(M)_{\leq p})$ with a $\crk(p)$-fold join of the space $X$.  The $f$-polynomial of this space is easily computed using formulas (1)-(3) as $$f(Y; t) =  \sum\limits_{p \in \lat(M)} \left(\frac{(f(\Delta(\lat(M)_{\leq p}); t)-1) \cdot (f(X; t)^{\crk(p)} - 1)}{t} + 1\right) - (|\lat(M)| - 1).$$ Thus, the only difficulty in computing the $f$-polynomial of $\T_X M$ is accounting for the quotient $\sim$.  One could proceed na\"ively by sieving out the over-counted cells identified by $\sim$, but it is simpler to observe that $\T_X M$ can be decomposed nicely into a disjoint union of half-open spaces, as illustrated in Figure \ref{figure:homotopy-colimit} of Example \ref{ex:hocolim}.  In particular, since every vertex $q \in \Delta(\lat(M)_{\leq p}) \setminus p$ is identified with itself in $\Delta(\lat(M)_{\leq q})$, it suffices to consider only the space $\Delta^{\circ}_M(p)$ that is the union of all open cells in $\Delta(\lat(M)_{\leq p})$ whose closures contain $p$.  The desired result follows by replacing $\Delta(\lat(M)_{\leq p})$ with $\Delta^{\circ}_M(p)$ for each $p \in \lat(M)$.  
\end{proof}

At first glance, the formula in Theorem \ref{thm:main} may appear rather opaque, but it is quite easy to use for a number of important classes of matroids, as we hope to illustrate in Examples \ref{ex:main} and \ref{ex:fano}.  Also, note that for $\hat{1} = E(M) \in \lat(M)$, $D_X(\hat{1}) = \emptyset$, which makes the summand in Theorem \ref{thm:main} equal to zero and hence, it suffices to take the sum over $\lat(M) \setminus \hat{1}$.

\begin{example}\label{ex:main}  Let $U_{r,n}$ denote the rank $r$ \emph{uniform matroid} on $n$ elements, that is, the matroid on $[n]$ whose flats consist of $[n]$ and all subsets of size less than $r$. In this case, $L(U_{r,n}) \setminus \hat{1}$ is isomorphic to the subposet of $B_n$ consisting of all elements with rank less than $r$.  Thus for any $p \in L(U_{r,n}) \setminus \hat{1}$ with $\rk(p) = i$, $$\Delta^{\circ}_{L(U_{r,n})}(p) \times D_X(p) = \Delta^{\circ}_{B_i}(\hat{1}) \times X^{*(r-i)}.$$  As such, we note that $f(\Delta^{\circ}_{B_i}(\hat{1}); t) = 1 + \sum\limits_{k = 0}^{i} k! S(i+1,k+1) t^{k+1}$ since the number of $k$ dimensional faces of $\Delta^{\circ}_{B_i}(\hat{1})$ is exactly the number of chains of length $k$ in $B_i$ that contain $\hat{1}$.  This is exactly the number of ordered partitions of $\{\emptyset,1,2,\ldots,i\}$ with $k+1$ parts that have $\emptyset$ in the first part.  So, bringing everything together, Theorem \ref{thm:main} asserts that $$f(\T_X U_{r,n}; t) = 1 + \sum\limits_{i = 0}^{r} \binom{n}{i} \cdot \mathcal{F}_i(t) \cdot (f(X; t)^{r-i} - 1),$$ where $\displaystyle \mathcal{F}_i(t) = \frac{f(\Delta^{\circ}_{B_i}(\hat{1}); t) - 1}{t} = \sum\limits_{k=0}^{i} k! S(i+1,k+1)t^k$. 
\end{example}

\begin{remark}\label{rem:main} Since every rank $r$ matroid on $n$ elements is a surjective weak map image of $U_{r,n}$ \cite{White}, Corollary \ref{surj_cell} implies that the formula in Example \ref{ex:main} gives upper bounds for the $f$-polynomials of the Engstr\"om representations of any matroid.
\end{remark}

\subsection{The Total Number of Faces of an Engstr\"om Representation}

From here on, we restrict our attention to the Engstr\"om representations where $X = S^0$.  These are codimension one homotopy sphere arrangements and therefore the most natural objects to compare with the Folkman-Lawrence representations of oriented matroids.  

For a finite CW complex $\Delta$, let $f_{total}(\Delta) := f(\Delta; 1)$ denote the total number of faces in $\Delta$. 

\begin{corollary}\label{cor:total}
If $M$ is a rank $r$ matroid on $n$ elements, then $$f_{total}(\T_{S^0} M) \leq f_{total}(\T_{S^0} U_{r,n}) = 1+ \sum\limits_{i = 0}^{r} \binom{n}{i} \cdot (2 F_i - 0^i) \cdot (3^{r-i}-1),$$ where $F_i = \sum\limits_{k = 0}^{i} k!S(i,k)$ is the $i$-th ordered Bell (or Fubini) number.  
\end{corollary}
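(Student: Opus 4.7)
The plan is to reduce the corollary to a direct computation using the formula from Example \ref{ex:main}. The inequality $f_{total}(\T_{S^0} M) \leq f_{total}(\T_{S^0} U_{r,n})$ follows immediately from Remark \ref{rem:main}: since every rank $r$ matroid on $n$ elements is a surjective weak map image of $U_{r,n}$, Corollary \ref{surj_cell} gives a surjective cellular map $\T_{S^0}U_{r,n} \to \T_{S^0}M$, so the inequality holds at the level of each coefficient of the $f$-polynomial, hence also after setting $t=1$. The real content is therefore to evaluate the right-hand side of Example \ref{ex:main} at $t = 1$ when $X = S^0$.

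Next I would substitute $t = 1$ into
$$f(\T_{S^0} U_{r,n}; t) = 1 + \sum_{i=0}^{r} \binom{n}{i} \cdot \mathcal{F}_i(t) \cdot (f(S^0; t)^{r-i} - 1).$$
Since $S^0$ is two points, $f(S^0;t) = 1 + 2t$, so $f(S^0;1) = 3$ and the last factor becomes $3^{r-i} - 1$, matching the stated formula. It remains only to show $\mathcal{F}_i(1) = 2F_i - 0^i$, where by convention $0^0 = 1$.

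For this identity, I would apply the Stirling recurrence $S(i+1,k+1) = S(i,k) + (k+1)\,S(i,k+1)$ to each term of
$$\mathcal{F}_i(1) = \sum_{k=0}^{i} k!\, S(i+1,k+1),$$
splitting it as $\sum_{k=0}^{i} k!\, S(i,k) + \sum_{k=0}^{i}(k+1)!\,S(i,k+1)$. The first sum is exactly $F_i$. After the reindexing $j = k+1$, the second sum becomes $\sum_{j=1}^{i+1} j!\, S(i,j)$, which equals $F_i - 0!\,S(i,0)$ once I observe $S(i,i+1) = 0$; and $S(i,0)$ is $1$ when $i = 0$ and $0$ otherwise, that is, $0^i$. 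Combining gives $\mathcal{F}_i(1) = 2F_i - 0^i$, completing the proof.

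The only subtle point — and the place I expect the one genuine opportunity for error — is the boundary correction $0^i$: a careless appeal to "$S(i,0) = 0$" would drop the $i = 0$ term, which (via $\binom{n}{0} = 1$) contributes $1 \cdot (3^r - 1)$ to $f_{total}(\T_{S^0}U_{r,n})$, and getting this coefficient right is exactly what the $2F_0 - 0^0 = 1$ evaluation encodes.
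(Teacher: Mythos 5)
Your proof is correct and follows essentially the same route as the paper: the inequality via Remark \ref{rem:main}, the equality by evaluating Example \ref{ex:main} at $t=1$ with $f(S^0;t)=1+2t$, and the identity $\mathcal{F}_i(1) = 2F_i - 0^i$ via the Stirling recurrence $S(i+1,k+1) = (k+1)S(i,k+1) + S(i,k)$. Your remark on the $0^i$ boundary term is precisely the right subtlety to flag.
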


\begin{proof}
The inequality on the left follows immediately from Remark \ref{rem:main} and the equality on the right follows from Example \ref{ex:main} by setting $f_{S^0}(t) = 1 + 2t$.  We then evaluate at $t = 1$, and observe that \begin{align*} \sum\limits_{k = 0}^i k! S(i+1,k+1) &= \sum\limits_{k = 0}^i \left((k+1)! S(i,k+1) + k! S(i,k) \right) \\ &= S(i,i+1) + 2 \cdot \sum\limits_{k = 0}^i k! S(i,k) - S(i,0), \end{align*} $S(i,i+1) = 0$, and $S(i,0) = 0^i$.
\end{proof}

\begin{lemma}\label{lem:Engstrom}
For every $r \in \N$, $f_{total}(\T_{S^0} U_{r,n})$ is a polynomial of degree $r-1$ in $n$ whose leading coefficient is $\frac{4 \cdot F_{r-1}}{(r-1)!}$.
\end{lemma}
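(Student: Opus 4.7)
The plan is to read the claim directly off the closed formula
\[
f_{total}(\T_{S^0} U_{r,n}) = 1+ \sum_{i = 0}^{r} \binom{n}{i} \cdot (2 F_i - 0^i) \cdot (3^{r-i}-1)
\]
established in Corollary \ref{cor:total}. Since each $\binom{n}{i}$ is a polynomial in $n$ of degree exactly $i$ with leading coefficient $1/i!$, the entire right-hand side is \emph{a priori} a polynomial in $n$ of degree at most $r$.

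First I would observe that the top summand $i = r$ is killed by the factor $3^{r-i} - 1$, which vanishes precisely when $i = r$; this is the key cancellation that drops the degree of the sum from the naive bound $r$ down to $r - 1$. Next I would isolate the summand $i = r - 1$: for $r \geq 2$ we have $0^{r-1} = 0$, so this summand simplifies to
\[
\binom{n}{r-1} \cdot 2 F_{r-1} \cdot (3 - 1) \;=\; 4 F_{r-1} \binom{n}{r-1},
\]
which is a polynomial in $n$ of degree $r - 1$ with leading coefficient $\frac{4 F_{r-1}}{(r-1)!}$. Because $F_{r-1} > 0$ and every remaining summand $i < r - 1$ contributes a polynomial of degree strictly less than $r - 1$, none of those lower terms can alter either the degree or the leading coefficient, and the lemma follows.

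There is no real mathematical obstacle here; the proof is essentially a degree count applied to the formula from Corollary \ref{cor:total}. The only bookkeeping wrinkle is the boundary case $r = 1$, where the convention $0^0 = 1$ forces the $i = 0$ summand to equal $2$ rather than $4$. A direct inspection gives $\T_{S^0} U_{1,n} \cong S^0$ and hence $f_{total}(\T_{S^0} U_{1,n}) = 3$, so I would either dispatch this case with a one-line verification at the end of the proof or simply present the general argument under the assumption $r \geq 2$.
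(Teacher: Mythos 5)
Your proof is essentially the same as the paper's: read the closed formula off Corollary~\ref{cor:total}, write $f_{total}(\T_{S^0} U_{r,n}) = 1 + \sum_{i=0}^r a_i\binom{n}{i}$ with $a_i = (2F_i - 0^i)(3^{r-i}-1)$, note that $a_r = 0$ kills the top degree, and extract the leading coefficient from the $i = r-1$ term using $\binom{n}{i} \sim n^i/i!$. That is precisely the paper's argument.

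Where you go further is the boundary case $r = 1$, and you are right to be suspicious there: the paper's step $a_{r-1} = 4F_{r-1}$ silently uses $0^{r-1} = 0$, which fails when $r = 1$. In that case $a_0 = (2F_0 - 0^0)(3-1) = 2$, so the full expression is the constant $1 + 2 = 3$, while the lemma's formula $\frac{4F_0}{0!} = 4$ gives the wrong value. (Your computation $\T_{S^0} U_{1,n} \cong S^0$, hence $f_{total} = 3$, confirms this.) So the remedy you offer as ``a one-line verification at the end of the proof'' would not verify the lemma --- it would falsify it for $r = 1$. The other option you mention is the correct one: the argument is clean and the statement is true for $r \geq 2$, and the lemma should really be read with that restriction (or with the $r=1$ leading coefficient corrected to $3$). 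This is a genuine, if minor, oversight in the paper that your proof caught; just be clear in your own write-up that the fix is to exclude $r = 1$ rather than to check it.
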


\begin{proof}
By Corollary \ref{cor:total}, $f_{total}(\T_{S^0} U_{r,n})$ is of the form $\sum\limits_{i=0}^r a_i \binom{n}{i}$ where $a_i = (2F_i-0^i)(3^{r-i}-1)$.  Since $\binom{n}{i}$ is a polynomial of degree $i$ in $n$ with leading term $\frac{1}{i!}$ and $a_r = 0$, it follows that $f_{total}(\T_{S^0} U_{r,n})$ is a degree $r-1$ polynomial in $n$ with leading term $\frac{a_{r-1}}{(r-1)!} = \frac{4F_{r-1}}{(r-1)!}$.         
\end{proof}

It follows from Corollary \ref{cor:total}  that the total number of faces of $\T_{S^0} M$ is bounded above by a  polynomial of degree $r-1$ in $n$.  In the case that $M$ is oriented, the total number of faces of $\S M$ is also bounded by a  polynomial of degree $r-1$ in $n$, as indicated in the following two results.  Therefore, the bound on the total number of faces in $\T_{S^0}(M)$ is the strongest upper bound one could hope to achieve.

\begin{proposition}[\cite{BLSWZ}, Chapter 4]\label{prop:FL}
For every oriented matroid $M$, $$f(\S M; t) = \sum\limits_{\substack{p,q \in L(M) \setminus \hat{0} \\ p \leq q}} | \mu_{L(M)}(p,q)| t^{\crk(p)}.$$
\end{proposition}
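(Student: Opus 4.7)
The plan is to adapt Zaslavsky's classical face-counting formula to the Folkman--Lawrence setting by labelling each cell of $\S M$ with its associated signed covector. First, using the Folkman--Lawrence theorem cited in the preliminaries, every cell $c$ of $\S M$ (including the empty face) corresponds to a covector $X \in \mathcal{L}$; the zero set $z(X) \in L(M)$ is the flat supporting $c$, and $\dim c = \crk(z(X)) - 1$. Grouping cells by their associated flat gives
$$
f(\S M; t) \;=\; \sum_{p \in L(M)} N(p)\, t^{\crk(p)},
$$
where $N(p) := \#\{X \in \mathcal{L} : z(X) = p\}$. Since no covector can have zero set equal to $\hat{0}$, the sum is effectively over $L(M) \setminus \hat{0}$.

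Next, I would identify $N(p)$ with the number of topes of the contraction $M/p$. Restricting a covector to the ground set $E \setminus p$ gives a bijection between $\{X \in \mathcal{L}(M) : z(X) = p\}$ and the set of topes (all-nonzero covectors) of $M/p$, whose underlying geometric lattice is the interval $[p, \hat{1}] \subseteq L(M)$.

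The third step is to invoke the oriented-matroid analogue of Zaslavsky's theorem (BLSWZ, Theorem 4.6.1): the number of topes of $M/p$ equals $|\chi_{M/p}(-1)|$, where $\chi_{M/p}$ is the characteristic polynomial of the underlying matroid. A short computation using Whitney's sign theorem --- that $\mu$ alternates with rank on any geometric lattice --- rewrites this as $|\chi_{M/p}(-1)| = \sum_{q \in [p, \hat{1}]} |\mu_{L(M)}(p, q)|$. Substituting back into the grouped $f$-polynomial produces exactly the claimed identity. The main obstacle is essentially bookkeeping: making the dimension formula $\dim c = \crk(z(X)) - 1$ and the identification $N(p) = \#\mathrm{Topes}(M/p)$ both precise in the non-realizable case requires the Topological Representation Theorem and the contraction formalism of BLSWZ Chapter 4, whereas in the realizable (central hyperplane arrangement) case both are immediate from the standard Zaslavsky picture.
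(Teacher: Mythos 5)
The paper gives no proof here---it merely cites the result from the BLSWZ reference---so the only question is whether your argument is sound. The overall route is the right one: group cells by the flat $z(X)$, identify covectors with $z(X)=p$ with topes of the contraction $M/p$, and count those topes via the oriented-matroid Zaslavsky/Las Vergnas theorem together with the alternating sign of the M\"obius function to get $\sum_{q\geq p}|\mu_{L(M)}(p,q)|$. Steps two and three of your outline are correct.

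Step one, however, contains a real error. You claim ``no covector can have zero set equal to $\hat{0}$,'' but this is false: for a loopless oriented matroid the topes are precisely the covectors with $z(X)=\emptyset=\hat{0}$, and every oriented matroid of positive rank has topes. Consequently your intermediate identity $f(\S M;t)=\sum_{p\in L(M)}N(p)\,t^{\crk(p)}$, with $N(p)$ defined as a count of \emph{covectors}, is actually the $f$-polynomial of the full cell decomposition of $S^{r-1}$ and has a nonzero $t^{r}$ term coming from $N(\hat{0})$. The correct reason the $p=\hat{0}$ term is absent is that $\S M$, as used in this paper (and as forced by the degree $\leq r-1$ of the claimed formula), is the $(r-2)$-dimensional pseudosphere arrangement $\bigcup_e S_e$ itself rather than the whole cellularly decomposed sphere; the topes are the complementary open chambers and are simply not cells of $\S M$. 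Once you identify the cells of $\S M$ with exactly those covectors $X$ having $z(X)\neq\hat{0}$ (so that $N(p)$ equals the number of cells supported on $p$ only for $p\neq\hat{0}$), the grouping over $p\in L(M)\setminus\hat{0}$ is justified and the rest of your argument delivers the stated formula.
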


\begin{corollary}\label{cor:FL}
If $M$ is an oriented matroid of rank $r$ on $n$ elements, then $$f_{total}(\S M) \leq f_{total}(\S U_{r,n}) = 1+ 2 \sum\limits_{i = 0}^{r-1} \sum\limits_{k=0}^{r-i-1}  \binom{n}{i}  \binom{n-i-1}{k}.$$  
\end{corollary}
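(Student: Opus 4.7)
The plan mirrors the two-step structure of Corollary \ref{cor:total}: first establish the inequality $f_{total}(\S M) \leq f_{total}(\S U_{r,n})$ by a weak-map argument, and then obtain the explicit formula for the uniform matroid by direct computation from Proposition \ref{prop:FL}.

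For the inequality, I would invoke the oriented-matroid analog of Remark \ref{rem:main}. Every rank-$r$ matroid on $n$ elements is a surjective weak image of the uniform matroid $U_{r,n}$, and the corresponding weak map induces a surjective cellular map of Folkman-Lawrence representations (a standard fact in the BLSWZ setting, analogous to Corollary \ref{surj_cell} for the Engstr\"om construction). Hence $f_{total}(\S M)$ is bounded above by $f_{total}(\S U_{r,n})$.

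For the equality, I would apply Proposition \ref{prop:FL} at $t = 1$ to $U_{r,n}$ and work directly with the truncated Boolean lattice $\lat(U_{r,n}) = B_n^{<r} \cup \{\hat{1}\}$. For each $p$ with $|p| = i$, I would split the inner sum $\sum_{q \geq p} |\mu(p,q)|$ according to whether $q = \hat{1}$: for $q \neq \hat{1}$ the interval $[p,q]$ is Boolean, so all $|\mu|$'s equal $1$ and the contribution is $\sum_{k=0}^{r-1-i}\binom{n-i}{k}$, while for $q = \hat{1}$ the upper interval $[p,\hat{1}]$ is itself a truncated Boolean lattice of rank $r-i$ on $n-i$ elements, giving the classical M\"obius value $|\mu(p,\hat{1})| = \binom{n-i-1}{r-i-1}$. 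After grouping by $i$ and factoring out $\binom{n}{i}$, the remaining work is the Pascal-type identity
$$\sum_{k=0}^{r-1-i}\binom{n-i}{k} + \binom{n-i-1}{r-i-1} = 2\sum_{k=0}^{r-i-1}\binom{n-i-1}{k},$$
which follows by splitting $\binom{n-i}{k} = \binom{n-i-1}{k} + \binom{n-i-1}{k-1}$ and telescoping. This identity is the main technical obstacle in the computation: it is precisely what produces both the factor of $2$ and the tail binomial $\binom{n-i-1}{k}$ appearing in the claimed closed form, and once it is in hand the two halves of the corollary assemble immediately.
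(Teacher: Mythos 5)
Your proposal is correct and follows essentially the same route as the paper: the inequality is attributed to a weak-map/surjection argument in the spirit of Remark~\ref{rem:main}, and the closed form for $U_{r,n}$ is obtained by evaluating Proposition~\ref{prop:FL} at $t=1$, splitting the M\"obius sum at $q=\hat{1}$ (Boolean intervals contribute $\sum_{k=0}^{r-i-1}\binom{n-i}{k}$, the top interval contributes $\binom{n-i-1}{r-i-1}$), and combining via the same binomial identity the paper uses (the paper states the same computation with $|\mu(p,\hat{1})|$ left as the alternating sum $\sum_k(-1)^{k+r-i-1}\binom{n-i}{k}$, which equals your $\binom{n-i-1}{r-i-1}$). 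The only difference is cosmetic: you make the Pascal-recurrence step explicit, whereas the paper passes directly to the final form.
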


\begin{proof}
The inequality on the left follows from Remark \ref{rem:main}.  For the equality on the right, observe that for each element $p \in L(U_{r,n})$ with corank $i < r$, $L(U_{r,n})_{\geq p}$ is a truncated Boolean lattice on $n-i$ elements.  Therefore, \begin{align*}\sum\limits_{\substack{p,q \in L(U_{r,n}) \setminus \hat{0}\\ p \leq q}}  | \mu_{L(M)}(p,q)| &= | \mu_{L(M)}(p,\hat{1})| + \sum\limits_{\substack{p,q \in L(U_{r,n}) \setminus \{\hat{0},\hat{1}\} \\ p \leq q}}  | \mu_{L(M)}(p,q)| \\ &= \sum\limits_{k=0}^{r-i-1} (-1)^{k+r-i-1} \binom{n-i}{k} + \sum\limits_{k=0}^{r-i-1} \binom{n-i}{k} \\ &= \sum\limits_{k=0}^{r-i-1} 2 \binom{n-i-1}{k}. \end{align*} The result follows from Proposition \ref{prop:FL} by summing over all $p \in L(U_{r,n})$.
\end{proof}


\begin{lemma}\label{lem:FL}
For every $r \in \N$, $f_{total}(\S U_{r,n})$ is a  polynomial of degree $r-1$ in $n$ with leading coefficient $\frac{2^r}{(r-1)!}$.
\end{lemma}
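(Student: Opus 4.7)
The plan is to start from the closed form in Corollary \ref{cor:FL} and read off the degree and leading coefficient directly, using only that $\binom{n}{j}$ is a polynomial in $n$ of degree $j$ with leading coefficient $\frac{1}{j!}$.

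First I would rewrite
$$f_{total}(\S U_{r,n}) = 1 + 2 \sum_{i=0}^{r-1}\sum_{k=0}^{r-i-1} \binom{n}{i}\binom{n-i-1}{k}$$
and note that each summand $\binom{n}{i}\binom{n-i-1}{k}$ is a polynomial in $n$ of degree $i+k$. Since $i$ ranges over $\{0,\dots,r-1\}$ and $k$ over $\{0,\dots,r-i-1\}$, the total degree $i+k$ is maximized precisely when $k = r-i-1$, giving degree $r-1$. All other summands contribute lower-order terms, so $f_{total}(\S U_{r,n})$ is a polynomial in $n$ of degree exactly $r-1$ provided the leading coefficients of the maximal-degree summands do not cancel.

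Next I would extract the leading coefficient by summing the contributions from the pairs $(i,r-i-1)$ for $i = 0,\dots,r-1$. The leading coefficient of $\binom{n}{i}\binom{n-i-1}{r-i-1}$ is $\frac{1}{i!(r-i-1)!}$, so the leading coefficient of $f_{total}(\S U_{r,n})$ is
$$2\sum_{i=0}^{r-1} \frac{1}{i!\,(r-i-1)!} \;=\; \frac{2}{(r-1)!}\sum_{i=0}^{r-1}\binom{r-1}{i} \;=\; \frac{2\cdot 2^{r-1}}{(r-1)!} \;=\; \frac{2^{r}}{(r-1)!},$$
via the binomial identity $\sum_{i=0}^{r-1}\binom{r-1}{i} = 2^{r-1}$.

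There is no real obstacle here: the identity falls out once one recognizes that only the $k = r-i-1$ terms contribute to the top degree and that the resulting sum collapses via the binomial theorem. The only thing worth being careful about is confirming that the constant term $1$ and the negative shift $-i-1$ inside $\binom{n-i-1}{k}$ do not affect the leading coefficient (they only shift lower-order terms), which is immediate from the standard formula $\binom{n+c}{k} = \frac{n^k}{k!} + O(n^{k-1})$ for any constant $c$.
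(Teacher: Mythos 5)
Your proposal is correct and follows essentially the same route as the paper's proof: both observe that $\binom{n}{i}\binom{n-i-1}{k}$ has degree $i+k$ in $n$, isolate the $k=r-i-1$ terms as the only contributors to the top degree, and collapse the resulting sum $2\sum_{i=0}^{r-1}\frac{1}{i!(r-i-1)!}$ via the binomial theorem to $\frac{2^r}{(r-1)!}$. The one small point you address explicitly (that the shift in $\binom{n-i-1}{k}$ and the additive $1$ do not affect the leading coefficient) is implicit in the paper's argument but nice to have spelled out.
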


\begin{proof}
We know $\binom{n}{i}$ is a polynomial of degree $i$ in $n$ with leading coefficient $\frac{1}{i!}$ and $\binom{n-i-1}{k}$ is a polynomial of degree $k$ in $n$ with leading coefficient $\frac{1}{k!}$.  So by Corollary \ref{cor:FL}, $f_{total}(\S U_{r,n})$ is a sum of polynomials of degree $i+k$ in $n$.  Since $k \leq r-i-1$, $f_{total}(\S U_{r,n})$ has degree at most $r-1$ and its $(r-1)$-st coefficient $a_{r-1}$ is determined by pairs $(i,k)$ with $k = r-i-1$.  Thus, $$a_{r-1} = 2 \sum\limits_{i=0}^{r-1}\frac{1}{i!(r-i-1)!} = \frac{2}{(r-1)!} \sum\limits_{i=0}^{r-1} \binom{r-1}{i} = \frac{2^r}{(r-1)!}. \qedhere$$  \end{proof}

For any oriented matroid $M$, let $\rho(M) := \frac{f_{total}(\T_{S^0} M)}{f_{total}(\S M)}$ denote the ratio of the total number of faces of the Engstr\"om representation of $M$ (with $X = S^0$) to that of its Folkman-Lawrence representation.      
By Lemmas \ref{lem:Engstrom} and \ref{lem:FL}, $f_{total}(\T_{S^0} U_{r,n})$ and $f_{total}(\S U_{r,n})$ are both degree $r-1$ polynomials in $n$ with leading coefficients $\frac{4 \cdot F_{r-1}}{(r-1)!}$ and $\frac{2^r}{(r-1)!}$, respectively.  This proves the following theorem. 

\begin{theorem}
For every $r \in \N$, $\displaystyle \lim_{n \to \infty} \rho(U_{r,n}) = \frac{F_{r-1}}{2^{r-2}}$ where $F_i$ is the $i$-th ordered Bell number. \qed
\end{theorem}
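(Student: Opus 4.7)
The plan is to observe that this theorem follows almost immediately from the two preceding lemmas, so the proof is essentially a one-line computation of a ratio of leading coefficients. The main task is just to package the two previous lemmas together.

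First I would recall that by definition $\rho(U_{r,n}) = f_{total}(\T_{S^0} U_{r,n}) / f_{total}(\S U_{r,n})$. By Lemma \ref{lem:Engstrom}, the numerator is a polynomial in $n$ of degree $r-1$ with leading coefficient $\frac{4 F_{r-1}}{(r-1)!}$, and by Lemma \ref{lem:FL}, the denominator is a polynomial in $n$ of degree $r-1$ with leading coefficient $\frac{2^r}{(r-1)!}$. For any two polynomials of the same degree in $n$, the limit of their ratio as $n \to \infty$ equals the ratio of their leading coefficients.

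Next I would compute this ratio explicitly:
\[
\lim_{n \to \infty} \rho(U_{r,n}) \;=\; \frac{4 F_{r-1}/(r-1)!}{2^r/(r-1)!} \;=\; \frac{4 F_{r-1}}{2^r} \;=\; \frac{F_{r-1}}{2^{r-2}},
\]
which is exactly the claimed value. The $(r-1)!$ factors cancel, and the constant $4 = 2^2$ combines with $2^r$ in the denominator to produce $2^{r-2}$.

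There is no real obstacle here; all the substantive combinatorics was done in Corollary \ref{cor:total}, Corollary \ref{cor:FL}, and Lemmas \ref{lem:Engstrom} and \ref{lem:FL}. The only thing to be careful about is that both leading coefficients are nonzero (which they are, since $F_{r-1} \geq 1$ and $2^r \geq 1$), so the limit is genuinely finite and nonzero and the ratio-of-leading-coefficients argument is valid. Hence the statement follows, and the proof can be closed with \textbf{qed} on a single display.
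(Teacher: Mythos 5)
Your proof is correct and follows exactly the paper's own argument: the theorem is stated with a \qed because it is an immediate consequence of Lemmas \ref{lem:Engstrom} and \ref{lem:FL}, both giving degree $r-1$ polynomials in $n$, and the limit is the ratio of leading coefficients $\frac{4F_{r-1}/(r-1)!}{2^r/(r-1)!} = \frac{F_{r-1}}{2^{r-2}}$. Nothing is missing.
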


Barth\'el\'emy \cite{bart80} showed that $F_i \approx \frac{i!}{2(\ln 2)^{i+1}}$.  So, for large $n$, the ratio $\rho(U_{r,n}) \approx \frac{(r-1)!}{\ln 2 (\ln 4)^{r-1}}$ grows slower than the corresponding factorials by a small exponential factor.  This prompts us to ask the following question.

\begin{question}
Are there topological representations of matroids whose total face numbers grow more closely to those of the Folkman-Lawrence representations as the rank gets large?
\end{question}

One can replace the function $3^{r-i}-1$ in Corollary \ref{cor:total} with $2(r-i)$ by swapping $D_{S^0}(p)$ with the standard cell structure on $S^{r-i-1}$ consisting of two cells in each dimension, but this does not yield a meaningful improvement for large values of $n$ since the leading coefficient of the total number of faces of such a construction is still determined by the $(r-1)$-st term in the sum.

\begin{question}
How large can the ratio $\rho(M)$ be for an arbitrary rank $r$ matroid $M$ on $n$ elements?
\end{question}

One might hope to bound $\rho(M)$ in one direction by $\rho(U_{r,n})$, but it is easy to find small counterexamples preventing this.  It is not clear to the authors how $\rho(M)$ behaves with respect to $\rho(U_{r,n})$ as $n$ gets large.

\section{Appendix}

We conclude with an application of Theorem \ref{thm:main} to the Fano matroid.  

\begin{example}\label{ex:fano}
Recall that the Fano plane, $F$, is a rank three matroid on seven elements, as depicted in Figure \ref{fig:fano1},  and that $\lat(F)_{\leq p} \cong \lat(F)_{\leq q}$ for every $p,q \in \lat(F)$ with $\rk(p) = \rk(q)$.
\begin{figure}[H]
\begin{center} \scalebox{0.6}{\includegraphics{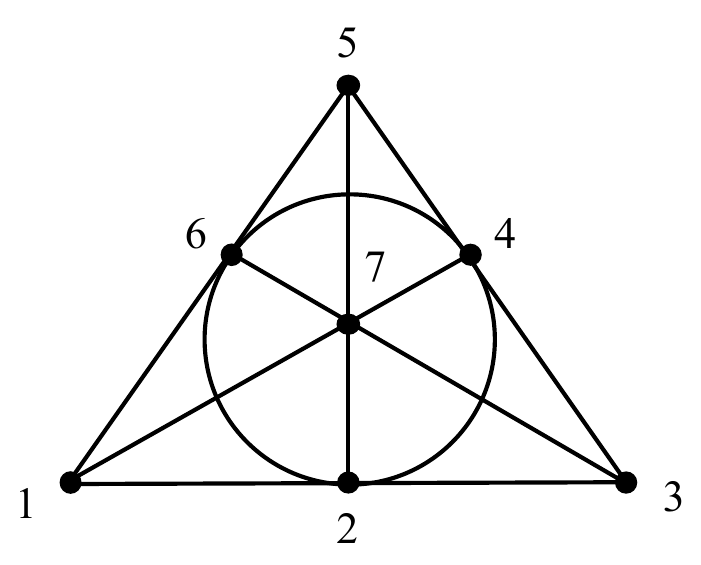} \hspace{1cm} \includegraphics{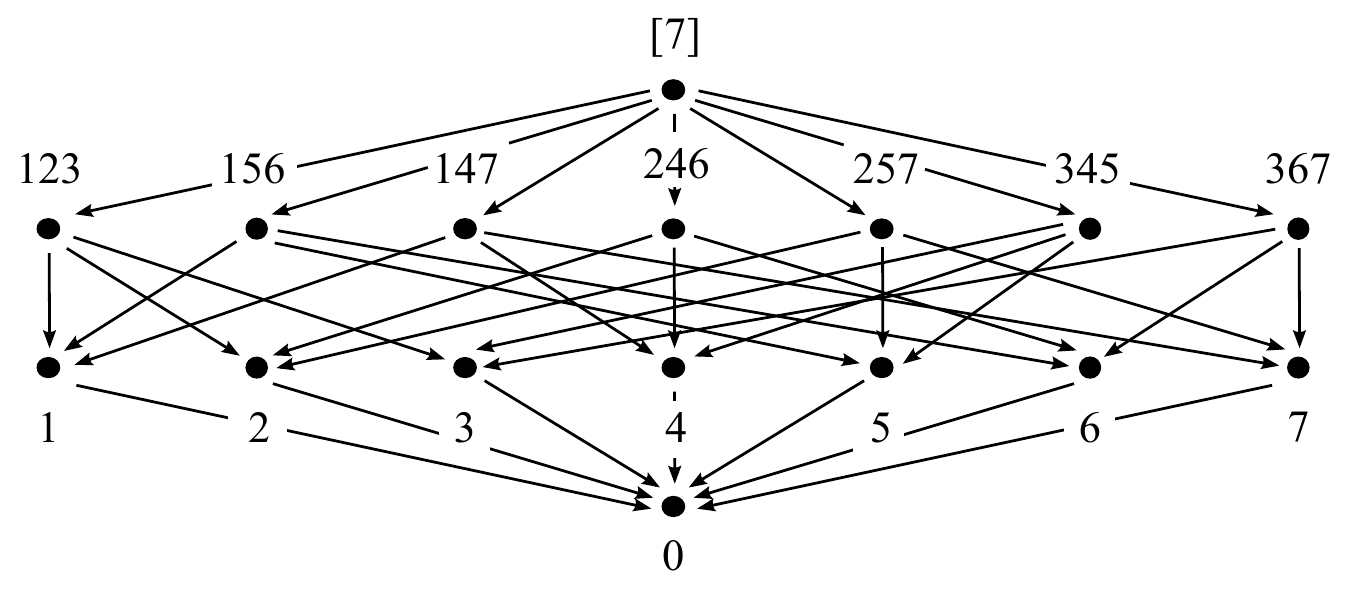}}\end{center}
\caption{The Fano plane (left) and its lattice of flats (right).} \vspace{-0.25cm} \label{fig:fano1}
\end{figure}  

\noindent We leave it to the reader to verify that $$f_{\Delta^{\circ}_F(p)}(t) = \begin{cases} 1 + t & \rk(p) = 0, \\ 1 + t + t^2 & \rk(p) = 1, \\ 1 + t + 4t^2 + 3t^3 & \rk(p) = 2. \end{cases}$$  Plugging this into Theorem \ref{thm:main}, along with $f_{S^0}(t) = 1 + 2t$, we get that \begin{align*} f_{\T_{S^0} F}(t) &= 1 + 1(6t+12t^2+8t^3) + 7(4t +8t^2+4t^3)+7(2t+8t^2+6t^3)+1(0) \\ &= 1+48t+124t^2+78t^3. \end{align*}  A portion of $\T_{S^0} F$, namely $\hocolim_{\lat(F)_{> \emptyset}} \D_{S^0}$, is illustrated in Figure \ref{fig:fano2}. \\

\begin{figure}[H]
\begin{center} \scalebox{0.35}{\includegraphics{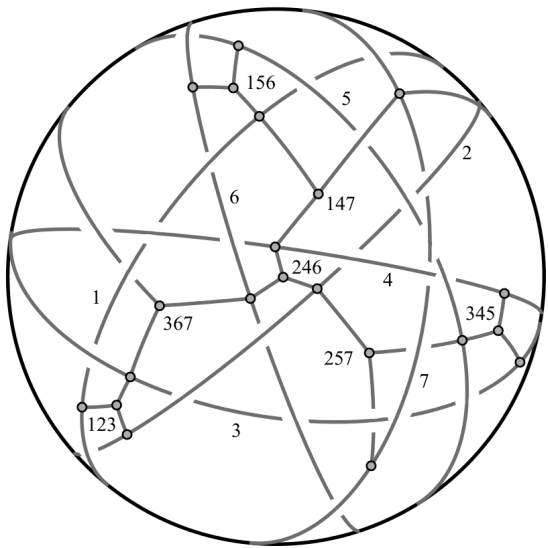}}\end{center}
\caption{A portion of the Engstr\"om representation of $F$ with $X = S^0$.} \vspace{-0.25cm} \label{fig:fano2}
\end{figure}
\end{example}

\begin{ack}
The authors wish to thank Louis Billera and Jes\'us De Loera for (independently) suggesting this project and for their several helpful conversations.  This research was partially supported by NSF Grant \#1159206.
\end{ack}

\bibliographystyle{amsplain}
\bibliography{references}{}

\end{document}